\documentclass[a4paper]{article}
\usepackage{tikz}
\usetikzlibrary{shapes.geometric}
\usetikzlibrary{calc}
\usepackage{amsmath}
\usepackage{mathtools}
\usepackage{amsthm}
\usepackage{multicol}

\usepackage{fullpage}
\usepackage{enumitem}
\setlist[itemize]{noitemsep}

\usepackage{natbib}
\newcommand{\cac}{olive}
\newcommand{\cbc}{teal}
\newcommand{\ccc}{orange}
\newcommand{\cdc}{purple}

\usepackage{mdwlist}
\renewenvironment{description}%
{
\begin{basedescript}{
\desclabelstyle{\nextlinelabel}
\desclabelwidth{2em}}}
{
\end{basedescript}
}

\usepackage{titlesec}
\titleformat*{\section}{\fontsize{10pt}{10pt}\bfseries\selectfont}

\newtheorem{theorem}{Theorem}

\begin{document}

\begin{center}
    
{ \bf \large
The chromatic number  of the associahedron: simple and logarithmic
}

\vspace{5pt}
Benjamin Aram Berendsohn \\
{\it Max Planck Institute for Informatics}

\vspace{5pt}
Jean Cardinal and
John Iacono \\ {\it Université libre de Bruxelles}

\vspace{5pt}
László Kozma \\{\it Technische Universität Dresden}\\%

\end{center}

\begin{abstract}
  Addario-Berry, Reed,  Scott, and Wood  (JoCG 2026) proved that the $n$-dimensional associahedron has chromatic number at most $22500\cdot\log_3 n+O(1)$. We present a much simpler proof that the chromatic number of the $n-1$-dimensional associahedron is at most $6 \log_2 n + 18$. Since this paper's appearance as a preliminary abstract in the Japan Conference on Discrete and Computational Geometry, Graphs, and Games held from September 7-10, 2026, the bound has been improved to $O(\log \log n)$ by Oum and Wood (arXiv September 30, 2026). It is hoped that the simple method here could give insights to further improvements.
\end{abstract}

\begin{figure}[h]
\begin{center}
\resizebox{10cm}{!}{\input{fig_n5}}
\end{center}
\caption{The graph of the $4$-dimensional associahedron: Nodes are all binary search trees on $\{1,2,3,4,5\}$. Edges connect those nodes representing trees that are a single rotation apart where the nodes of the rotation appear in the black circle on each edge. The graph is 3-colored. 
}
\end{figure}

\paragraph{Introduction.}
Associahedra are polytopes that appear in many areas of mathematics.
We refer to the survey of Pilaud, Santos, and Ziegler~\cite{MR4675114} for an overview of the developments around these polytopes in the past few decades.
We are interested in the graphs of associahedra.
For a given positive integer~$n$, the vertices of the $(n-1)$-dimensional associahedron are in bijection with triangulations of a convex $(n+2)$-gon, or, via planar duality, with rooted binary trees on $n$ (internal) nodes.
Hence for a given $n$, the number of vertices is $C_n = \frac{(2n)!}{(n+1)!n!}$, the $n$th Catalan number.
Edges of associahedra connect pairs of triangulations that differ by a single
\emph{diagonal-flip}, an operation that consists of 
replacing a diagonal of a quadrilateral
formed by two triangles sharing an edge by the other diagonal. Equivalently
--- and this is the representation that we will use in our proofs ---
edges connect pairs of vertices corresponding to two binary trees that differ by
a single rotation.

While being well-studied, the structure of the associahedron graph still remains
mysterious. It is known to have diameter $2n-6$ for sufficiently large $n$~\cite{MR4556611,MR3197650}, and
to be Hamiltonian (see for instance~\cite{MR3383157} for a generalization).
Chromatic properties of related graphs are also investigated~\cite{MR4778792,GargantiniPastineEtAl2026,MPZZ25}.

The natural question of the chromatic number of associahedra was posed by Fabila-Monroy, Flores-Pe\~naloza, Huemer, Hurtado, Wood, and Urrutia~\cite{DBLP:journals/dmtcs/MonroyFHHWU09} who showed a lower bound of 4 and an upper bound of $O(n/log n)$.
It is only recently that Addario-Berry, Reed, Scott, and Wood~\cite{Addario-Berry_Reed_Scott_Wood_2026} showed that the chromatic number of associahedra is at most $22500 \cdot \log_3 n + O(1) = O(\log n)$.
Their proof is elementary but quite involved. We give a simple proof that the chromatic
number of the $(n-1)$-dimensional associahedron is at most $6\log_2 n + 18$. This has since been improved to $O(\log \log n)$ \cite{oum2026doublelogarithmicupperboundchromatic} by Oum and Wood but we hope that the simple method here could give insights to further improvements.

\paragraph{Preliminaries.} We make the following definitions, which lead up to assigning any binary tree $T$ with $n$ nodes an integer color in the range 0 to $6 \lceil \log_2 n \rceil +17$.

\begin{itemize}
    \item If $x$ is a node in some tree $T$, let $s_T(x)$ be the number of nodes in the subtree of $x$ in $T$
    \item A node is \emph{left heavy} if its left subtree is larger than its right subtree; otherwise it is \emph{right heavy}.
    \item Let $L(T)$ be the set of left-heavy nodes in $T$.
    \item Let the \emph{left color} of $T$, $c_\ell(T)$ be $|L(T)| \bmod 3$.
    \item The \emph{heavy child} of a nonleaf node is its left child if it is left heavy and its right child if it is right heavy.
    \item Let $H(T)$ be the set of heavy children in $T$.
    \item Let the \emph{heavy child color} of $T$, $c_h(T)$, be $\left\lceil  \sum_{x \in H(T)} \log_2 s(x) \right\rceil \bmod \lceil \log_2n+1 \rceil$
    \item Let $P(T)$ be the set of \emph{heavy} paths, which is the graph obtained by connecting each non-leaf node to its heavy child. The \emph{heavy path depth} of a node is is its depth relative to the shallowest node (in the tree) on its heavy path. The \emph{path color}, $c_p(T)$, is the parity of the number of left heavy nodes of even path depth. 
    \item Finally, let the \emph{color} of $T$, $c(T)$ be $6c_h(T)+2c_\ell(T)+c_p(T)$. This is an integer in the range 0 to $6 \lceil \log_2 n + 1 \rceil +4+1 \leq 6 \log_2n+17$. It is designed so that if either the heavy child color or the left color or the path color differs between two trees then the color differs.
\end{itemize}

We now state the theorem:

\begin{theorem}
The chromatic number of the $(n-1)$-dimensional associahedron is at most $6\cdot \log_2 n + 18$.
\end{theorem}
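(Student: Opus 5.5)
We show that $c$ is a proper coloring: whenever $T$ and $T'$ differ by a single rotation, at least one of $c_h$, $c_\ell$, $c_p$ differs, so $c(T)\neq c(T')$. The range bound in the definitions then gives the theorem.

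\textbf{Setup.} Fix a rotation at an edge $(x,y)$. In $T$, $y$ is the parent with left child $x$. The subtrees are $A,B$ (children of $x$) and $C$ (right subtree of $y$). In $T'$, $x$ is the parent with right child $y$, and $y$ has children $B,C$. Write $a,b,c$ for the sizes of $A,B,C$. Only $x$ and $y$ change subtree sizes. Only $x$, $y$ and the parent $p$ of the rotated pair can change heaviness status or heavy child, and $p$'s heavy child changes only by relabelling, since $s(y)$ in $T$ equals $s(x)$ in $T'$. So all differences are local, determined by $a,b,c$.

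\textbf{Case analysis.} In $T$, $y$ is left heavy iff $a+b+1>c$, and $x$ is left heavy iff $a>b$. In $T'$, $x$ is left heavy iff $a>b+c+1$, and $y$ is left heavy iff $b>c$. Nodes outside $\{x,y\}$ keep their left-heavy status, so $|L(T)|-|L(T')|\in\{-2,-1,0,1,2\}$. Whenever this difference is nonzero, the left color $c_\ell$ distinguishes the trees, since the values are taken mod $3$.

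So we may assume $|L|$ is unchanged. Enumerating the consistent sign patterns gives the cases where the count is preserved:
\begin{itemize}
\item[(i)] $x$ and $y$ swap roles with the same total;
\item[(ii)] both statuses are the same in $T$ and $T'$.
\end{itemize}
In each such case we compare the heavy-child sums. In $T$ the heavy children among the affected nodes are the heavy child of $x$ (the root of $A$ or of $B$), the heavy child of $y$ (either $x$, with size $a+b+1$, or the root of $C$), and possibly $x$ or $y$ as heavy child of $p$. In $T'$ the analogous terms appear with $y$ of size $b+c+1$. The contribution of $p$ cancels, because its heavy child has the same size in both trees.

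\textbf{Main obstacle.} The change $\Delta=\sum_{H(T)}\log_2 s-\sum_{H(T')}\log_2 s$ must be shown to be nonzero modulo $\lceil\log_2 n+1\rceil$ after taking ceilings, or else the path color must differ. The typical case is $y$ heavy to $x$ in $T$ and $x$ heavy to $A$ in $T'$. Then $\Delta=\log_2(a+b+1)+\log_2 s(\mathrm{hc}(x)) - \log_2 s(\mathrm{hc}'(x))-\log_2 s(\mathrm{hc}'(y))$. I would try to show that in every $|L|$-preserving case, $\Delta$ is at least $1$ in absolute value (for instance $\log_2(a+b+1)-\log_2 b\geq 1$ when $a>b$), and at most $\log_2 n$. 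Then the ceiling of the sum changes by an amount in $[1,\lceil\log_2 n\rceil]$, which is nonzero modulo the chosen modulus. The difficulty is that $\lceil\cdot\rceil$ of real sums can absorb a change of exactly $1$ only if $|\Delta|<1$. So I would isolate the residual cases where $|\Delta|<1$ and handle them with $c_p$.

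\textbf{Residual cases.} These occur when the heavy path structure is preserved but $x$ and $y$ exchange their positions along a heavy path, for example a left-left path becoming rotated. There the heavy path depths of $x$ and $y$ shift by one relative to each other. Exactly one left-heavy node moves between even and odd path depth, so the parity $c_p$ flips. Verifying that this depth-parity bookkeeping works in every residual configuration, including when a rotation splits or merges heavy paths and shifts depths of entire subpaths, is the delicate step I expect to be the main obstacle.
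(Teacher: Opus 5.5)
Your reduction is sound and matches the paper. Only $x$ and $y$ are affected, and the parent's term cancels. A change in $|L|$ is caught by $c_\ell$. A change $\Delta$ of the heavy-child sum with $1\le|\Delta|\le\log_2 n$ moves the ceiling by between $1$ and $\lceil\log_2 n\rceil$, which is less than the modulus. Your $\log_2\frac{a+b+1}{b}\ge 1$ computation is also the paper's Case I.

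\textbf{The gap.} The actual content of the proof is left as a plan: which $|L|$-preserving configurations exist, and which of $c_h$, $c_p$ handles each. Using your orientation, the missing steps are these.
\begin{itemize}
\item Three patterns are infeasible and need no handling. The two ``swap'' patterns require either $a\le b$ together with $a>b+c+1$, or $c\ge a+b+1>b>c$. The pattern ``$x$ left heavy and $y$ right heavy in both trees'' requires $a>b+c+1$ and $c\ge a+b+1$.
\item Both left heavy in both trees: $\Delta=\log_2\frac{a+b+1}{b}$ with $a>b>c\ge 0$, so $\Delta\in(1,\log_2 n]$.
\item Both right heavy in both trees: $\Delta=\log_2\frac{b+c+1}{b}$ with $c>b$, or $\Delta=\log_2(c+1)$ with $c\ge 1$ when $b=0$ and $x$ is a leaf in $T$. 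Again $|\Delta|\in[1,\log_2 n]$.
\item The only residual pattern is $x$ right heavy and $y$ left heavy in both trees. There $\Delta=\log_2(a+b+1)-\log_2(b+c+1)$, which is $0$ when $a=c$ (e.g.\ $a=c=1$, $b=2$). So $c_h$ genuinely fails here, and $c_p$ is needed.
\end{itemize}

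\textbf{The residual case is misidentified.} A ``left-left path becoming rotated'' is the both-left-heavy case. There $y$ does split off onto the heavy path of $B$ and shifts that whole path's depths, so $c_p$ need not flip. It doesn't have to, because $c_h$ changes.

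In the true residual case nothing splits or merges:
\begin{itemize}
\item The heavy path runs $y\to x\to\mathrm{root}(B)$ in $T$ and $x\to y\to\mathrm{root}(B)$ in $T'$. It continues from $p$ in $T'$ exactly when it did in $T$.
\item The roots of $A$ and $C$ are light children in both trees.
\item Hence every path depth except those of $x$ and $y$ is unchanged.
\item $y$, the only left-heavy node of the pair, moves by exactly one in path depth, so $c_p$ flips.
\end{itemize}
This is exactly the step you defer as the ``main obstacle''. Without it and the infeasibility checks, the proposal does not yet show that $c$ is a proper coloring.
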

\textbf{Proof.} Given the above scheme, all that remains to prove is that a single rotation always changes the color of the tree.

Consider a rotation that transforms a tree $T$ to $T'$. We assume the rotation is on two nodes $x$ and $y$ where $y$ is the right child of $x$ in $T$ and $x$ becomes the left child of $y$ in $T'$; the reverse case is symmetric. We consider 16 cases based on whether $x$ and $y$ are each left or right heavy before and after the rotation. These are illustrated in Figure~\ref{f1}. These are split into four groups, those where the left color changes, those where the heavy child color changes, and finally those cases that are impossible.

\begin{description}
    \item[\textcolor{\cac}{Cases II-V, VIII, IX, XII-XV (Left color changes)}] In these cases the number of left heavy nodes, $|L(T)|$ changes by 1 or 2, and thus $c_\ell(T)$ which is $|L(T)| \bmod 3$ changes.
    \item[\textcolor{\cbc}{Cases I and XVI (Heavy child color changes)}] For I (XVI is symmetric): Here the increase in $\sum_{x \in H} \log_2 s(x)$ is $\log_2 (|A|+|B|+1) - \log_2 |B| = \log_2 \frac{|A|+|B|+1}{|B|}$. 
    As $|A|>|B|$, $\log \frac{|A|+|B|+1}{|B|}\geq \log_2 \frac{|B|+|B|+1}{|B|}>1$. Also, $\log_2 \frac{|A|+|B|+1}{|B|}\leq \log_2 n$. Thus $\sum_{x \in H} \log_2 s(x)$ changes by at least 1 and at most $\log_2 n$, which causes $c_h(T) = \left\lceil  \sum_{x \in H} \log_2 s(x) \right\rceil \bmod \lceil \log_2n+1 \rceil$ to change.
    \item[\textcolor{\cdc}{Case X}] The key point of this case is that the heavy paths do not change, with the one exception that $x$ and $y$, which are adjacent on the same heavy path, have their positions change; $x$ remains right heavy and $y$ remains left heavy. Either $x$ before (right heavy) and $y$ after (left heavy) are at even depth on their path or $y$ before (left heavy) and $x$ after (right heavy) are at even depth; in either case the number of left heavy nodes of even path depth changes by one and thus the path color changes.
    \item[\textcolor{\ccc}{Cases VI, VII and XI (Impossible)}] These are impossible. For VI (XI is symmetric) the following contradictions occur:
    \begin{itemize}[topsep=0pt]
        \item $|A|>|B|+|C|+1>|B|$ and
        \item $|A|\leq |B|$
    \end{itemize}
    For VII:
      \begin{itemize}[topsep=0pt]
        \item $|A|>|B|+|C|+1>|C|$ and
        \item $|C|\geq |A|+|B|+1>|A|$
    \end{itemize}  
\end{description}

\paragraph{Acknowledgments} Jean Cardinal and John Iacono's research is supported by the F.R.S.–FNRS. Generative AI was used to create the Tikz figures but did not write a single word and was not used to obtain the result.

\newcommand{\ltree}[4]{%
\begin{tikzpicture}[
    every node/.style={font=\small},
    circle node/.style={circle, draw, inner sep=1pt,thick},
    triangle node/.style={
        regular polygon,
        regular polygon sides=3,
        draw=black,
        thick,
        inner sep=-0.5pt, 
    },
    level distance=0.7cm,
    sibling distance=0.7cm,
        edge from parent/.style={ultra thick},
    edge from parent path={
        (\tikzparentnode.south) -- (\tikzchildnode.north)
    }
]
\node[circle node] (x) {$x$}
    child {
        node[triangle node] (A) {$A$}
        edge from parent[draw=#1]
    }
    child {
        node[circle node] (y) {$y$}
        edge from parent[draw=#2]
        child {
            node[triangle node] (B) {$B$}
            edge from parent[draw=#3]
        }
        child {
            node[triangle node] (C) {$C$}
            edge from parent[draw=#4]
        }
    };
\end{tikzpicture}
}

\newcommand{\rtree}[4]{%
\begin{tikzpicture}[
    every node/.style={font=\small},
    circle node/.style={circle, draw, inner sep=1pt,thick},
    triangle node/.style={
        regular polygon,
        regular polygon sides=3,
        draw=black,
        inner sep=-0.5pt, 
        thick
    },
    level distance=0.7cm,
    sibling distance=0.7cm,
    edge from parent/.style={ultra thick},
    edge from parent path={
        (\tikzparentnode.south) -- (\tikzchildnode.north)
    }
]
\node[circle node] (x) {$y$}
    child {
        node[circle node] (y) {$x$}
        edge from parent[draw=#1]
        child {
            node[triangle node] (A) {$A$}
            edge from parent[draw=#3]
        }
        child {
            node[triangle node] (B) {$B$}
            edge from parent[draw=#4]
        }
        }
    child {
        node[triangle node] (C) {$C$}
        edge from parent[draw=#2]
    };
\end{tikzpicture}
}

\newcommand{\lta}{\ltree{red}{black}{red}{black}}
\newcommand{\ltb}{\ltree{red}{black}{black}{blue}}
\newcommand{\ltc}{\ltree{black}{blue}{red}{black}}
\newcommand{\ltd}{\ltree{black}{blue}{black}{blue}}

\newcommand{\rta}{\rtree{red}{black}{red}{black}}
\newcommand{\rtb}{\rtree{red}{black}{black}{blue}}
\newcommand{\rtc}{\rtree{black}{blue}{red}{black}}
\newcommand{\rtd}{\rtree{black}{blue}{black}{blue}}

\newcounter{casenum}

\newcommand{\ra}{%
{\stepcounter{casenum}%
\raisebox{42pt}{$\mathclap{\text{Case \Roman{casenum}}}$}}%
{\raisebox{20pt}{$\mathclap{\Rightarrow}$}}%
}

\newcommand{\lhc}[1]{Left heavy change: #1}

\begin{figure}
\resizebox{\textwidth}{!}{%
\begin{tabular}{|c|c|c|c|}
\hline
\lta \textcolor{\cbc}{\ra} \rta &
\lta \textcolor{\cac}{\ra}  \rtb &
\lta \textcolor{\cac}{\ra}  \rtc &
\lta \textcolor{\cac}{\ra}  \rtd 
\\
\lhc{0}
&
\lhc{-1}
&
\lhc{-1}
&
\lhc{-2}
\\ \hline
\ltb \textcolor{\cac}{\ra}  \rta &
\ltb \textcolor{\ccc}{\ra}  \rtb &
\ltb \textcolor{\ccc}{\ra}  \rtc &
\ltb \textcolor{\cac}{\ra}  \rtd 
\\
\lhc{+1}
&
Impossible
&
Impossible
&
\lhc{-1}
\\ \hline
\ltc \textcolor{\cac}{\ra}  \rta &
\ltc \textcolor{\cdc}{\ra}  \rtb &
\ltc \textcolor{\ccc}{\ra}  \rtc &
\ltc \textcolor{\cac}{\ra}  \rtd 
\\
\lhc{+1}
&
\lhc{0}
&
Impossible
&
\lhc{-1}
\\ \hline
\ltd \textcolor{\cac}{\ra} \rta &
\ltd \textcolor{\cac}{\ra}  \rtb &
\ltd \textcolor{\cac}{\ra}  \rtc &
\ltd \textcolor{\cbc}{\ra}  \rtd 
\\
\lhc{+2}
&
\lhc{+1}
&
\lhc{+1}
&
\lhc{0}
\\ \hline 
\end{tabular}
}
\caption{
The cases. We consider a left rotation on $x$ with right child $y$. A red edge indicates a left heavy node above and a blue edge indicates a right heavy node above. We consider all 16 possible cases of left and right heaviness for $x$ and $y$ before and after the rotation. 
In \textcolor{\cac}{Cases II-V, VIII, IX, XII-XV} the number of left heavy nodes change by 1 or 2 and thus the left color changes. 
In \textcolor{\cdc}{Case X} the only change in heavy paths is the relative ordering of $x$ and $y$ is swapped, each retaining their heaviness; thus the path color changes.
\textcolor{\ccc}{Cases VI, VII and XI} are impossible. 
\textcolor{\cbc}{Cases I and XVI} are possible but have no change in the number of left heavy nodes and thus require a more detailed argument and are the source of the logarithmic number of colors.
}
\label{f1}
\end{figure}

\bibliographystyle{abbrv}
\bibliography{bib}

\end{document}